\xdef\@endgadget#1{{\unskip\nobreak\hfil\penalty50\hskip1em\hbox{}\nobreak
    \hfil#1\parfillskip=0pt\finalhyphendemerits=0\par}}
\def\@qedsymbol{${}_\blacksquare$}
\def\qed{\@endgadget{\@qedsymbol}}
\newtheorem{lemma}{Lemma}[section]
\newtheorem{corollary}[lemma]{Corollary}
\newtheorem{definition}[lemma]{Definition}
\newtheorem{proposition}[lemma]{Proposition}
\newtheorem{remark}[lemma]{Remark}
\newcommand{\mR}{\mathbb{R}}
\newcommand{\X}{\mathcal{X}}
\newcommand{\M}{\mathcal{M}}
\newcommand{\R}{\mathcal{R}}
\newcommand{\D}{\mathcal{D}}
\newcommand{\V}{\mathcal{V}}
\newcommand{\E}{\mathcal{E}}
\newcommand{\F}{\mathcal{F}}
\newcommand{\cL}{\mathcal{L}}
\newcommand{\pperp}{\perp \!\!\!\perp}
\newcommand{\bq}{\begin{equation}}
\newcommand{\eq}{\end{equation}}
\newcommand{\bma}{\begin{bmatrix}}
\newcommand{\ema}{\end{bmatrix}}
\DeclareMathOperator{\im}{im}
\DeclareMathOperator{\rank}{rank}
\def\BibTeX{{\rm B\kern-.05em{\sc i\kern-.025em b}\kern-.08em
    T\kern-.1667em\lower.7ex\hbox{E}\kern-.125emX}}
\title{\LARGE \bf Linear port-Hamiltonian DAE systems revisited}
\author{Arjan van der Schaft, Volker Mehrmann
\thanks{A.J. van der Schaft is with the Bernoulli Institute for Mathematics, Computer
Science and AI, Jan C. Willems Center for Systems and Control, University of Groningen, the
Netherlands, {\tt\small A.J.van.der.Schaft@rug.nl},
V. Mehrmann is with the Institut f\"ur Mathematik, TU Berlin, Germany, {\tt\small mehrmann@math.tu-berlin.de}.}
}
\begin{document}

\maketitle
\begin{abstract}
Port-Hamiltonian systems theory provides a systematic methodology for the modeling, simulation and control of multi-physics systems. The incorporation of algebraic constraints has led to a multitude of definitions of port-Hamiltonian differential-algebraic equations (DAE) systems. This paper presents extensions of results in \cite{GerHR21,MehS22_ppt} in the context of maximally monotone structures and shows that any such space can be written as composition of a Dirac and a resistive structure. Furthermore, appropriate coordinate  representations are presented as well as explicit expressions for the associated transfer functions.
\end{abstract}

{\bf Keywords.} Port-Hamiltonian system, differential algebraic equation, Lagrange structure, Dirac structure, maximally monotone structure.
\noindent

{\bf AMS subject classification.}
93C05,34A09,37J06,

\section{Introduction}
Port-Hamiltonian systems theory provides a systematic methodology for the modeling, simulation and control of large-scale multi-physics systems. It is based on the reticulation of the system into ideal components linked by energy flow. In the finite-dimensional (lumped parameter) case this leads to systems of differential-algebraic equations (DAE systems) with special properties, reflecting the underlying physical structure. These properties can be exploited for purposes of analysis, simulation, and control; see e.g. \cite{SchJ14,MehU23} and the references therein.

The original definition of a port-Hamiltonian system, see e.g. \cite{SchM95,DalS98,SchJ14}, starts from a geometric formulation using Dirac structures (representing the power-conserving interconnection structure), energy-dissipating relations, and Hamilton functions (Hamiltonians) formalizing energy storage, and the algebraic constraints derive from the Dirac structure; see e.g \cite{Sch13}. On the other hand, from a linear DAE point of view the algebraic constraints can be also formulated in terms of the energy relations of the system; see e.g. \cite{BeaMXZ18,MehMW18,MehU23}. Subsequently it was recognized in \cite{SchM18} that geometrically the formulation of algebraic constraints in terms of energy relations corresponds to the replacement of (gradients of) Hamiltonian functions by general Lagrangian subspaces. In the following we will use the terminology \emph{Lagrange structure} to stress the similarity with Dirac structures. Furthermore, in \cite{SchM18} it was shown how algebraic constraints resulting from the Dirac structure can be converted into algebraic constraints corresponding to the Lagrange structure (and conversely) by an augmentation of the state vector.

Various relations between linear port-Hamiltonian DAE formulations emphasizing Dirac structures and those emphasizing Lagrange structures were already studied in \cite{SchM18,GerHR21,MehS22_ppt}.
The present paper extends the results of \cite{GerHR21,MehS22_ppt} in a number of directions, emphasizing the notion of a \emph{maximally monotone structure}. A main result (as conjectured in \cite{MehS22_ppt}) is that any maximally monotone structure can be written as the composition of a Dirac structure and a resistive structure. This leads to an equivalent geometric definition of general linear port-Hamiltonian DAE systems (including energy dissipation and external variables) that is combining maximally monotone subspaces and Lagrange structures. Furthermore, it is shown to provide efficient coordinate representations, as well as explicit expressions for transfer functions.

\noindent
{\bf Notation} We consider finite dimensional real vector spaces and we freely identify linear maps with their matrix representations. Furthermore, given coordinates for some linear space $\V$ we always use corresponding dual coordinates for the dual space $\V^*$.

\section{Linear port-Hamiltonian DAE systems defined by Dirac and Lagrange structures}
In this section we extend the definition of linear port-Hamiltonian DAE systems using Lagrange structures for energy storage, which was originally put forward in \cite{SchM18}, to systems with energy dissipation and external variables.

Consider a finite-dimensional linear space $\F$, with $\dim \F=n$. Consider its dual space $\E:=\F^*$ and denote the duality product by $<e\mid f>, f\in \F, e\in \E$. On the product space $\F \times \E$ define the following two canonical bilinear forms
\[
\begin{array}{l}
\langle (f_1,e_1), (f_2,e_2) \rangle_+ := <e_1 \mid f_2> + <e_2 \mid f_1> \\[2mm]
\langle (f_1,e_1), (f_2,e_2) \rangle_- := <e_1 \mid f_2> - <e_2 \mid f_1>,
\end{array}
\]
with $(f_1,e_1), (f_2,e_2) \in \F \times \E$, corresponding to the two matrix representations
\[
\Pi_+ := \bma 0 & I_n \\ I_n & 0 \ema, \quad \Pi_- :=\bma 0 & -I_n \\ I_n & 0 \ema
\]
(where we recognize $\Pi_-$ as the standard symplectic form).
\begin{definition}
A subspace $\D \subset \F \times \E$ is a \emph{constant Dirac structure} if the bilinear form $\langle \cdot, \cdot \rangle_+$ is zero on $\D$ and moreover $\D$ is maximal with respect to this property. A subspace $\cL \subset \F \times \E$ is a \emph{Lagrange structure} if the bilinear form $\langle \cdot, \cdot \rangle_-$ is zero on $\cL$ and moreover $\cL$ is maximal with respect to this property. A Lagrange structure is called \emph{nonnegative} if the quadratic form defined by $\langle \cdot, \cdot \rangle_+$ is nonnegative on $\cL$.
\end{definition}
From now on \emph{constant} Dirac structures will be simply referred to as \emph{Dirac structures}. Simplest examples of Dirac structures are graphs of \emph{skew-symmetric} maps from $\E$ to $\F$ (or from $\F$ to $\E$). Likewise, Lagrange structures are exemplified by graphs of \emph{symmetric} maps. We have the following equivalent characterizations and properties; see e.g. \cite{SchM18} for the proofs.
\begin{proposition}
A subspace $\D \subset \F \times \E$ is a Dirac structure if and only if $\D=\D^{\pperp_+}$ where ${}^{\pperp_+}$ denotes orthogonal companion with respect to the bilinear form $\langle \cdot, \cdot \rangle_+$. A subspace $\D \subset \F \times \E$ is a Dirac structure if and only $<e \mid f>=0$ for all $(f,e) \in \D$ and $\dim \D= \dim \F$.

A subspace $\cL \subset \X \times \X^*$ is a Lagrange structure if and only if $\cL=\cL^{\pperp_-}$ where ${}^{\pperp_-}$ denotes orthogonal companion with respect to the bilinear form $\langle \cdot, \cdot \rangle_-$. Any Lagrange structure satisfies $\dim \cL =\dim \F$.
\end{proposition}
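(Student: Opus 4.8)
The plan is to treat all four statements through the standard theory of isotropic subspaces of a nondegenerate bilinear form, exploiting that both $\Pi_+$ and $\Pi_-$ are invertible. Since $\Pi_+$ is nondegenerate, the form $\langle\cdot,\cdot\rangle_+$ is nondegenerate, so for every subspace $S\subseteq\F\times\E$ one has $\dim S+\dim S^{\pperp_+}=2n$ and $(S^{\pperp_+})^{\pperp_+}=S$; the same holds verbatim for $\langle\cdot,\cdot\rangle_-$ and ${}^{\pperp_-}$. The condition that $\langle\cdot,\cdot\rangle_+$ vanish identically on $\D$ is precisely $\D\subseteq\D^{\pperp_+}$, and likewise $\langle\cdot,\cdot\rangle_-$ vanishes on $\cL$ iff $\cL\subseteq\cL^{\pperp_-}$; in either case this already forces $\dim\D\le n$, respectively $\dim\cL\le n$.

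For the first Dirac characterization I would first prove that a maximal isotropic $\D$ has dimension exactly $n$; granting this, $\dim\D^{\pperp_+}=2n-n=n=\dim\D$ together with $\D\subseteq\D^{\pperp_+}$ yields $\D=\D^{\pperp_+}$, while the converse is routine: if $\D=\D^{\pperp_+}$ then $\D$ is isotropic, and any isotropic $\D'\supseteq\D$ satisfies $\D'\subseteq(\D')^{\pperp_+}\subseteq\D^{\pperp_+}=\D$, so $\D'=\D$. The dimension claim is where the genuine content lies. Writing the associated quadratic form in dual coordinates as $<e\mid f>=e^{\top}f=\sum_i e_i f_i$ and using $e_i f_i=\tfrac14[(e_i+f_i)^2-(e_i-f_i)^2]$, one sees that $\langle\cdot,\cdot\rangle_+$ has signature $(n,n)$. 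For a maximal isotropic $\D$ the form induced on $\D^{\pperp_+}/\D$ is nondegenerate, since its radical is $\D^{\pperp_+}\cap(\D^{\pperp_+})^{\pperp_+}=\D^{\pperp_+}\cap\D=\D$, and it has signature $(n-\dim\D,\,n-\dim\D)$; by maximality it cannot admit a nonzero isotropic vector $[v]$, for otherwise $\D\oplus\mR v$ would be a strictly larger isotropic subspace. Since a nondegenerate form of signature $(k,k)$ with $k>0$ always possesses nonzero isotropic vectors, we conclude $\dim\D=n$.

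For the second characterization of Dirac structures I would pass between the bilinear form and its quadratic form by polarization: over $\mR$, $\langle\cdot,\cdot\rangle_+$ vanishes on $\D$ if and only if $<e\mid f>=\tfrac12\langle(f,e),(f,e)\rangle_+=0$ for every $(f,e)\in\D$. Combining this with the dimension count gives the equivalence: if $\D$ is a Dirac structure then the first characterization plus polarization yield $<e\mid f>=0$ on $\D$ and $\dim\D=n$; conversely, $<e\mid f>=0$ on $\D$ gives $\D\subseteq\D^{\pperp_+}$, so that $\dim\D=n$ already forces $\D=\D^{\pperp_+}$ and hence a Dirac structure by the first part.

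Finally, the Lagrange case follows the same skeleton but is in fact easier, because $\Pi_-$ is skew-symmetric, so $\langle v,v\rangle_-=0$ for every $v$ and any proper isotropic subspace can be enlarged. Concretely, if $\cL\subseteq\cL^{\pperp_-}$ with $\dim\cL<n$, then $\dim\cL^{\pperp_-}=2n-\dim\cL>\dim\cL$, so choosing $v\in\cL^{\pperp_-}\setminus\cL$ the sum $\cL\oplus\mR v$ is still isotropic (the diagonal term vanishes automatically and the cross terms vanish because $v\in\cL^{\pperp_-}$), contradicting maximality; hence $\dim\cL=n$, which gives both $\cL=\cL^{\pperp_-}$ and $\dim\cL=\dim\F$. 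The only real obstacle in the whole proposition is thus the signature/Witt-index argument pinning down $\dim\D=n$ in the symmetric case; everything else reduces to dimension counting and polarization.
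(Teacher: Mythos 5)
The paper does not actually prove this proposition; it defers to the reference [SchM18] (``see e.g.\ [SchM18] for the proofs''), so there is no in-paper argument to compare yours against. Judged on its own, your proof is correct and essentially complete. The reductions you use are all sound: nondegeneracy of $\Pi_\pm$ gives $\dim S+\dim S^{\pperp_\pm}=2n$; isotropy is exactly $S\subseteq S^{\pperp_\pm}$; polarization over $\mR$ identifies vanishing of the symmetric form $\langle\cdot,\cdot\rangle_+$ on $\D$ with $<e\mid f>=0$ on $\D$; and in the skew-symmetric Lagrange case the enlargement of any isotropic subspace of dimension $<n$ is immediate because the diagonal terms vanish automatically. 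You also correctly isolate the one substantive point, namely that a maximal $\langle\cdot,\cdot\rangle_+$-isotropic subspace has dimension exactly $n$, and your signature argument for it works: $\langle\cdot,\cdot\rangle_+$ has signature $(n,n)$, and a nondegenerate form of signature $(k,k)$ with $k>0$ visibly has a nonzero isotropic vector $v_++v_-$. The only step you assert without proof is that for a totally isotropic $U$ in a space of signature $(p,q)$ the induced form on $U^{\pperp_+}/U$ has signature $(p-\dim U,\,q-\dim U)$; this is a standard consequence of extending $U$ to a hyperbolic subspace and splitting it off, but in a fully written-out version you should either include that short argument or cite Witt decomposition explicitly, since it is genuinely the load-bearing fact (a nondegenerate quotient alone would not suffice --- it could a priori be definite). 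A common alternative in the Dirac-structure literature is to prove $\dim\D=n$ from an image/kernel representation of $\D$ (as in Dalsmo--van der Schaft), which trades the signature computation for a rank condition on the representing matrices; your coordinate-free route is cleaner and simultaneously handles both the symmetric and skew-symmetric cases.
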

This leads to the following geometric definition of a (generalized) port-Hamiltonian DAE system, extending the definition given in \cite{SchM18} for systems without energy dissipation and external variables.
\begin{definition}
\label{def:pH1}
Consider the following linear spaces: a state space $\X$, a space of resistive flows $\F_R$, and a space of external flows $\F_P$ (with subscript $P$ referring to 'port'). Furthermore, consider a Dirac structure
\[
\D \subset \X \times \X^* \times \F_R \times \E_R \times \F_P \times \E_P \quad (\mbox{where }\E_R:=\F_R^*,\, \E_P:=\F_P^*),
\]
a Lagrange structure
\[
\cL \subset \X \times \X^* \quad \mbox{ (corresponding to energy \emph{storage})},
\]
and a nonnegative Lagrange structure (called \emph{resistive structure})
\[
\R \subset \F_R \times \E_R \quad \mbox{ (corresponding to energy \emph{dissipation})}.
\]
This defines the port-Hamiltonian DAE system $(\D,\cL,\R)$ as the DAE system
\bq
\begin{array}{rcl}
\{(\dot{x},x,f_R,e_R,f_P,e_P) & \mid &\mbox{there exists } e \in \X^* \mbox{ s.t. } (-\dot{x},e,-f_R,e_R, f_P,e_P) \in \D, \\[2mm]
&& (f_R,e_R) \in \R, \, (x,e) \in \cL \},
\end{array}\label{extdae}
\eq
where $x \in \X, f_R \in \F_R,e_R \in \E_R, f_P \in \F_P,e_P \in \E_P$.
\end{definition}
\begin{remark}
In the 'classical' definition of a port-Hamiltonian DAE system, see e.g. \cite{Sch13,SchJ14}, $\cL= \mbox{graph}(Q):=
\{(x,Qx) \mid x \in \X \}$ for some symmetric matrix $Q: \X \to \X^*$ defining the \emph{Hamiltonian} $\mathcal H(x)=\frac{1}{2}x^\top Q x$. The external variables $e_P,f_P$ are often identified with the inputs and outputs of the system.
Note also that in \cite{MehS22_ppt} systems of the form \eqref{extdae} are called \emph{extended port-Hamiltonian DAE} to distinguish them from the definition of port-Hamiltonian DAEs in \cite{BeaMXZ18}.
\end{remark}
As elaborated in \cite{SchM18}, any Lagrange structure $\cL \subset \X \times \X^*$ can be represented as
\[
\cL = \ker \bma S^\top & - P^\top \ema = \im \bma P \\ S \ema \subset \X \times \X^*
\]
for square matrices $P,S$ satisfying
\bq
\label{SP}
S^\top P= P^\top S, \quad \rank \bma P \\ S \ema =n,
\eq
and, conversely, any such pair $P,S$ defines a Lagrange structure (which is nonnegative if and only if additionally $S^\top P \geq 0$).
It follows that
\bq
\label{SP1}
\cL= \{\bma x \\ e \ema \mid \bma x \\ e \ema = \bma P \\ S \ema z, \; z \in \mR^n \},
\eq
and, by the properties of the Dirac structure $\D$ and the nonnegative Lagrange structure $\R$, the dynamics of the system $(\D,\cL,\R)$ in \eqref{extdae} satisfies the \emph{energy balance}
\bq
\label{eq:energybalance}
\frac{d}{dt} (\frac{1}{2} z^\top S^\top P z) = - <e_R \mid f_R>  + <e_P \mid f_P>\, \leq \, <e_P \mid f_P>.
\eq
Here $<e_P \mid f_P>$ is the \emph{supplied power} due to the external ports with port variables $f_P,e_P$ (e.g., inputs and outputs).
Hence the Hamiltonian $\mathcal H(z):=\frac{1}{2} z^\top S^\top P z$ can be regarded as the \emph{stored energy} of the system (but expressed in the variables $z$ parametrizing $\cL$, instead of the state variables $x$).
Nonnegativity of $\mathcal H(z)$ is equivalent to nonnegativity of the Lagrange structure $\cL$.

Mirroring the developments in \cite{SchM18} we can easily derive \emph{coordinate representations} of linear port-Hamiltonian DAE systems $(\D,\cL,\R)$ as in \eqref{extdae}. Indeed, any Dirac structure $\D$ can be represented as
\[
\D= \{(f,e,f_R,e_R,f_P,e_P) \mid Kf + Le +K_Rf_R +L_Re_R + K_Pf_P +L_Pe_P=0 \}
\]
for certain matrices $K,L,K_R,L_R,K_P,L_P$ satisfying
\bq
\label{KL}
\begin{array}{rcl}
KL^\top + LK^\top + K_RL_R^\top   + L_RK_R^\top   + K_PL^\top_P  +L_PK^\top_P=0 \\[2mm]
\rank \bma K & L & K_R & L_R & K_P & L_P \ema = \dim \left(\X \times \F_R  \times \F_P\right).
\end{array}
\eq
Furthermore, the Lagrange structure $\cL$ can be represented as in \eqref{SP1}, and the resistive structure $\R$ as
\[
\R = \{(f_R,e_R) \mid S^\top_Rf_R= P^\top_Re_R \}, \; S_R^\top P_R= P_R^\top S_R \geq 0, \, \rank \bma P_R \\ S_R \ema = \dim \F_R.
\]
Substituting $f=-\dot{x}=P \dot{z}$ and $e=Sz$ one then obtains the following coordinate representation of a port-Hamiltonian DAE system in the $z$-variables parametrizing $\cL$, with external variables $f_P,e_P$,
\bq
\label{coor1}
KP\dot{z}=LSz +K_Rf_R +L_Re_R + K_Pf_P +L_Pe_P, \quad S^\top_Rf_R= P^\top_Re_R.
\eq
Here $f_R,e_R$ are auxiliary variables.
Note that $z$ is in general \emph{not} the vector of physical state variables $x$; cf. \cite{MehS22_ppt} for an extensive discussion (including physical examples) and an extended equivalence notion. Of course, if $\cL= \mbox{graph}(Q)$ as in the 'classical' definition of a linear port-Hamiltonian DAE system \cite{Sch13,SchJ14}, then $P=I_n$ and $z=x$.

\section{Linear port-Hamiltonian DAE systems defined by maximally monotone and Lagrange structures}
An \emph{alternative} viewpoint on port-Hamiltonian DAE systems is provided by combining the Dirac structure $\D$ and the nonnegative Lagrange structure $\R$ into a maximally monotone structure.
\begin{definition}
A subspace $\M \subset \F \times \E$ is a \emph{monotone structure} if the quadratic form defined by the bilinear form $\langle \cdot, \cdot \rangle_+$ is nonnegative on $\M$. It is a maximally monotone structure if moreover $\M$ is maximal with respect to this property.
\end{definition}
\begin{remark} The definition of a monotone structure is a special case of the notion of a monotone \emph{relation}.
Monotone structures (or subspaces) are also known (after a change of sign) as \emph{ dissipative linear relations}; see e.g. \cite{GerHR21}.
(Nonlinear) port-Hamiltonian DAE systems with respect to a general (maximally) monotone relation were coined as \emph{ incrementally port-Hamiltonian systems} in \cite{CamS13}; see \cite{CamS22} for further developments.
\end{remark}

Some basic properties are collected in the following proposition.
\begin{proposition}
\label{prop:monotone0}
A subspace $\M \subset \F \times \E$ is a monotone structure if and only if
\bq
\label{mono}
<e \mid f>\,  \geq 0
\eq
for all $(f,e) \in \M$, and a maximally monotone structure if $\M$ is maximal with respect to this property. Any monotone structure $\M$ satisfies $\dim \M \leq \dim \F$ and is a maximally monotone structure if and only if $\dim \M = \dim \F$.

Dirac structures and nonnegative Lagrange structures are maximally monotone structures. Furthermore, for any linear map $M: \E \to \F$ with $M+M^\top \geq 0$ its graph $\{(Me,e) \mid e \in \E \}$ is a maximally monotone structure (similarly for $M: \F \to \E$).
\end{proposition}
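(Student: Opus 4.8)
The plan is to reduce every assertion to the signature of the canonical symmetric form $\langle \cdot , \cdot \rangle_+$ on $\F \times \E$. First I would record that its associated quadratic form is $\langle (f,e),(f,e)\rangle_+ = 2 <e \mid f>$, so that nonnegativity of the quadratic form on $\M$ is literally condition \eqref{mono}; this settles the first equivalence, and together with the definition it also gives the statement about maximality in the second sentence. Next I would compute the signature of $\Pi_+$: its characteristic polynomial is $(\lambda^2 - 1)^n$, so its eigenvalues are $\pm 1$, each of multiplicity $n$, and hence $\langle \cdot , \cdot \rangle_+$ is nondegenerate of signature $(n,n)$.

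For the bound $\dim \M \leq n$ I would fix a negative definite subspace $N \subset \F \times \E$ of dimension $n$, for instance the $(-1)$-eigenspace of $\Pi_+$. If $\dim \M > n$, then $\dim \M + \dim N > 2n$ forces $\M \cap N \neq \{0\}$, and a nonzero vector there would give a value of $\langle \cdot , \cdot \rangle_+$ that is simultaneously $\geq 0$ (as it lies in $\M$) and $< 0$ (as it lies in $N$), which is impossible. The implication ``$\dim \M = n \Rightarrow \M$ maximally monotone'' is then immediate, since no monotone subspace can exceed dimension $n$.

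The crux, and the step I expect to be the main obstacle, is the converse: a monotone $\M$ with $\dim \M = k < n$ is never maximal. Here I would work with the orthogonal companion $\M^{\pperp_+}$ and the radical $\M_0 := \M \cap \M^{\pperp_+}$, and look for $w \in \M^{\pperp_+} \setminus \M$ with $\langle w , w \rangle_+ \geq 0$. For such a $w$ the cross term $\langle m , w \rangle_+$ vanishes for all $m \in \M$, so $\langle m + tw , m + tw \rangle_+ = \langle m , m \rangle_+ + t^2 \langle w , w \rangle_+ \geq 0$ and $\M + \mR w$ is a strictly larger monotone structure, contradicting maximality. To produce $w$ I would analyze the form induced on $\M^{\pperp_+}$: by nondegeneracy $(\M^{\pperp_+})^{\pperp_+} = \M$, so its radical is exactly $\M_0$, and a short signature bookkeeping (splitting off the positive definite part of $\M$ and passing to orthogonal complements) shows that the resulting nondegenerate quotient has positive index $n - k \geq 1$; any representative of a positive direction yields the required $w$. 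This signature count is the only genuinely technical point; alternatively one may simply invoke the standard theory of nonnegative subspaces in indefinite inner product spaces, where maximal nonnegative subspaces are known to have dimension equal to the positive index.

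Finally I would dispose of the remaining assertions by dimension counting, using the equivalence just proved. A Dirac structure has $\langle \cdot , \cdot \rangle_+ = 0$ on $\D$, hence is monotone, and $\dim \D = \dim \F = n$; a nonnegative Lagrange structure is monotone by definition and has $\dim \cL = \dim \F = n$; both are therefore maximally monotone. For the graph of $M : \E \to \F$ with $M + M^\top \geq 0$, the identity $<e \mid Me> = \tfrac{1}{2} e^\top (M + M^\top) e \geq 0$ gives monotonicity, while the graph has dimension $\dim \E = n$, so it is maximally monotone; the case $M : \F \to \E$ is identical.
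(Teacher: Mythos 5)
Your proposal is correct and follows essentially the same route as the paper: the paper's proof simply observes that $\Pi_+$ has signature $(n,n)$, asserts that the dimension bound and the equivalence of maximality with $\dim \M = n$ follow from this, and declares the remaining assertions obvious. You have merely filled in the details the paper omits --- the intersection argument with a negative definite subspace for the upper bound, the extension of a non-maximal $\M$ by a nonnegative vector in $\M^{\pperp_+}$, and the dimension counts for the examples --- all of which are sound.
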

\begin{proof}
Let $\dim \F =n$. Since $\Pi_+$ has $n$ eigenvalues $+1$ and $n$ eigenvalues $-1$, it follows that the dimension of any monotone structure $\M$ is less than or equal to $n$, and equal to $n$ if and only if $\M$ is a maximally monotone structure. The remaining assertions are obvious.
\end{proof}
From now on a linear map $M$ satisfying $M+M^\top \geq 0$ (or equivalently $e^\top Me \geq 0$ for all $e$) will be called a \emph{ monotone} (or after a change of sign  \emph{ dissipative}; cf. \cite{GerHR21}).
Let us now consider two subspaces with partially shared variables
\[
\M_1 \subset \F_1 \times \E_1 \times \F \times \E, \quad \M_2 \subset \F_2 \times \E_2 \times \F \times \E
\]
Define their \emph{ composition} as
\[
\begin{array}{rcl}
\M_1 \circ \M_2 &:= &\{ (f_1,e_1,f_2,e_2) \in \F_1 \times \E_2 \times \F_2 \times \E_1 \mid \mbox{ there exist } f,e \\[2mm]
&& \mbox{s.t. } (f_1,e_1,f,e) \in \M_1, (f_2,e_2,-f,e) \in \M_2 \}.
\end{array}
\]
It is shown in \cite{MehS22_ppt} that if both $\M_1$ and $\M_2$ are maximally monotone structures then so is $\M_1 \circ \M_2$. Returning to the setting of Definition \ref{def:pH1} it follows by Proposition \ref{prop:monotone} that the \emph{ composition} $\D \circ \R \subset \X \times \X^* \times \F_P \times \E_P$ of the Dirac structure $\D$ with the nonnegative Lagrange structure $\R$ (the resistive structure) is a maximally monotone structure.

This motivates the following {\it alternative} definition of a port-Hamiltonian DAE system, already given in \cite{CamS13} for the nonlinear case (using maximally monotone \emph{relations} instead of subspaces), and in the restricted context of linear DAE systems without external variables $f_P,e_P$ in \cite{GerHR21,MehS22_ppt}.
\begin{definition}
\label{def:pH2}
Consider a linear state space $\X$ and linear spaces of external flow variables $\F_P$  and external effort variables $e_P \in \E_P=\F_P^*$, together with a maximally monotone structure $\M \subset \X \times \X^* \times \F_P \times \E_P$, and a Lagrange structure $\cL \subset \X \times \X^*$. This defines the port-Hamiltonian DAE system $(\M,\cL)$
\[
\{(\dot{x},x,f_P,e_P) \mid \mbox{ there exists } e \in \X^* \mbox{ s.t. } (-\dot{x},e,f_P,e_P) \in \M, (x,e) \in \cL \}.
\]
\end{definition}
\begin{remark} 
The replacement of the composition of a Dirac structure and a resistive structure by a maximally monotone structure may have modeling advantages as well. For instance a lossy transformer may be immediately modeled as a maximally monotone structure, instead of first splitting it into an ideal transformer characteristic and a resistive relation.
\end{remark}
Note that, since the composition of $\D$ with $\R$ is a maximally monotone structure $\M$, Definition \ref{def:pH2} \emph{covers} the previous Definition \ref{def:pH1}. Actually it will turn out that both definitions are \emph{ equivalent}; see Corollary \ref{cor}.
A key observation in proving this (and more) is the following result.
\begin{lemma}
\label{lem:key}
Let $\M \subset \F \times \E$ be a maximally monotone structure. Define
\bq
\label{eq:key}
\begin{array}{rcl}
\F_0&=& \{ f \in \F \mid (f,0) \in \M \}, \\
 \F_1&=& \{ f \in \F \mid \mbox{there exists } e \in \E \mbox{ s.t. } (f,e) \in \M \}, \\[2mm]
 \E_0 &=& \{ e \in \E \mid (0,e) \in \M \}, \\
  \E_1&=& \{ e \in \E \mid \mbox{there exists } f \in \F \mbox{ s.t. } (f,e) \in \M \}.
\end{array}
\eq
Then
\bq
\label{eq:key1}
\F_0 = \E_1^\perp, \quad \E_0 = \F_1^\perp
\eq
\end{lemma}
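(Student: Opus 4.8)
The plan is to prove the first identity $\F_0 = \E_1^\perp$ in full and then obtain the second one, $\E_0 = \F_1^\perp$, by symmetry. Passing to the transposed structure $\M^\top := \{(e,f) \mid (f,e)\in\M\} \subset \E \times \F$ (using the canonical identification $\F=\E^*$) interchanges the roles of $\F_0,\F_1$ with $\E_0,\E_1$ and leaves the monotonicity condition $<e \mid f>\,\geq 0$ invariant, so $\M^\top$ is again maximally monotone; applying the first identity to $\M^\top$ then reads off as the second identity for $\M$. Thus only $\F_0=\E_1^\perp$ needs an argument.

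For the inclusion $\F_0 \subseteq \E_1^\perp$ I would use only linearity and monotonicity. Fix $f_0 \in \F_0$, so $(f_0,0)\in\M$, and take any $e_1 \in \E_1$ with a witness $(f_1,e_1)\in\M$. Since $\M$ is a subspace, $(f_1 + t f_0,\, e_1)\in\M$ for every $t\in\mR$, and monotonicity gives $<e_1 \mid f_1> + \,t\, <e_1 \mid f_0>\,\geq 0$ for all $t$. An affine function of $t$ that is nonnegative on all of $\mR$ must have zero slope, forcing $<e_1 \mid f_0>\,= 0$. As $e_1\in\E_1$ was arbitrary, this yields $f_0\in\E_1^\perp$.

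The reverse inclusion $\E_1^\perp \subseteq \F_0$ is the crux, and this is where \emph{maximality} enters. Given $f_0 \in \E_1^\perp$, I would enlarge $\M$ to $\M' := \M + \spa\{(f_0,0)\}$ and check that $\M'$ is still monotone. A typical element is $(f + s f_0,\, e)$ with $(f,e)\in\M$ and $s\in\mR$, and its duality product is $<e \mid f> + \,s\, <e \mid f_0>$. The first term is $\geq 0$ by monotonicity of $\M$, while the second vanishes because $e\in\E_1$ and $f_0\in\E_1^\perp$. Hence $\M'$ is a monotone structure containing $\M$, and maximality of $\M$ forces $\M' = \M$; in particular $(f_0,0)\in\M$, i.e.\ $f_0\in\F_0$.

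The only nontrivial point is this reverse inclusion, and its essential ingredient is maximality: without it $\E_1^\perp$ can be strictly larger than $\F_0$ (for instance when $\M=\{0\}$). An alternative to the maximality argument would be a dimension count: the projection $\M \to \E$ onto the effort variable has image $\E_1$ and kernel $\F_0 \times \{0\}$, so $\dim\F_0 = \dim\M - \dim\E_1 = n - \dim\E_1 = \dim\E_1^\perp$, where $\dim\M = n$ by Proposition \ref{prop:monotone0}; combined with the already established inclusion $\F_0\subseteq\E_1^\perp$ this again gives equality.
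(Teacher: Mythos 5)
Your proof is correct and follows essentially the same route as the paper: the inclusion $\F_0\subseteq\E_1^\perp$ via an affine-in-$t$ monotonicity argument, and the reverse inclusion via maximality (your version, which enlarges $\M$ to $\M+\spa\{(f_0,0)\}$ and checks subspace monotonicity directly, is if anything slightly more explicit than the paper's appeal to relation-maximality). The transposition symmetry and the dimension-count alternative are fine but play the role of the paper's ``completely analogous'' remark.
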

\begin{proof}
Let $f \in \F_0$. Then $(\alpha f,0) \in \M$ for all $\alpha \in \mathbb{R}$. Consider any $\tilde{e} \in \E_1$, i.e., there exists $\tilde{f}$ such that $(\tilde{f},\tilde{e}) \in \M$. Then by the linearity of $\M$
\[
(\tilde{f} - \alpha f, \tilde{e}) \in \M, \quad \mbox{for all } \alpha\in \mathbb{R}.
\]
This implies $\tilde{e}^\top \tilde{f} \geq \alpha \, \tilde{e}^\top f$ for all $\alpha \in \mathbb{R}$, and thus $\tilde{e}^\top f=0$. Hence $\F_0 \subset \E_1^\perp$. Conversely, let $f \in \E_1^\perp$. Thus, $\tilde{e}^\top f=0$ for all $\tilde{e}$ for which there exists $\tilde{f}$ such that $(\tilde{f},\tilde{e}) \in \M$. Therefore,
\[
(\tilde{e} -0)^\top (\tilde{f} -f) \geq 0
\]
for all $(\tilde{f},\tilde{e}) \in \M$. By the maximality of $\M$ this implies $(f,0) \in \M$, and thus $f \in \F_0$. Hence $\F_0 = \E_1^\perp$. The proof $\E_0 = \F_1^\perp$ is completely analogous.
\end{proof}
\begin{remark}
The same statement as in Lemma~\ref{lem:key} for Dirac structures was proved before in \cite{DalS98}. An analogous result for Lagrange structures follows from the results in \cite{SchM18}.
\end{remark}
The first consequence of this lemma is the following. Recall that, as noted in Proposition \ref{prop:monotone}, $\mbox{graph}(M) \subset \F \times \E$ is a maximally monotone structure for any monotone map $M: \E \to \F$. Conversely, we now show that any maximally monotone structure can be embedded into the graph of a monotone map on an \emph{ augmented} space.
\begin{proposition}\label{prop:monotone}
Consider a maximally monotone structure $\M \subset \F \times \E$, with $\F_0, \F_1, \E_0, \E_1$ as in \eqref{eq:key} in Lemma \ref{lem:key}. Let $\E_1= \ker G^\top$ for some linear map $G: \Lambda \to \X$ and a linear space $\Lambda$. Then there exists a monotone map $M$ such that
\bq
\label{eq:monrep}
\M = \{(f,e) \in \F \times \E \mid \mbox{there exists } \lambda \in \Lambda \mbox{ s.t. } \bma f \\[2mm] 0 \ema = \bma M & -G \\[2mm] G^\top  & 0 \ema \bma e \\[2mm] \lambda \ema \}.
\eq
\end{proposition}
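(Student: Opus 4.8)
The plan is to decode the right-hand side of \eqref{eq:monrep} and match it to $\M$ through the spaces of Lemma \ref{lem:key}. Writing out the block equation, a pair $(f,e)$ lies in the set on the right of \eqref{eq:monrep} precisely when $G^\top e = 0$ and $f = Me - G\lambda$ for some $\lambda \in \Lambda$, i.e. when $e \in \ker G^\top$ and $f - Me \in \im G$. Since by hypothesis $\ker G^\top = \E_1$, I would first record the identity $\im G = (\ker G^\top)^\perp = \E_1^\perp = \F_0$, where the middle equality is the standard duality $\im G = (\ker G^\top)^\perp$ (the inclusion $\subseteq$ is immediate since $\langle e \mid G\lambda\rangle = \langle G^\top e \mid \lambda\rangle = 0$ for $e \in \ker G^\top$, and $\rank G = \rank G^\top$ matches dimensions) and the last is Lemma \ref{lem:key}. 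Thus the target set is $\{(f,e) \mid e \in \E_1,\ f - Me \in \F_0\}$, and the whole problem reduces to producing a monotone $M$ whose restriction to $\E_1$ selects, modulo $\F_0$, the correct fibre of $\M$.

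Next I would construct $M$ on $\E_1$. The projection $\pi_\E \colon \M \to \E$ has image $\E_1$ and kernel $\{(f,0)\in\M\} = \F_0 \times \{0\}$, and $\dim\M = \dim\F$ gives $\dim\E_1 + \dim\F_0 = \dim\F$. Choosing a linear complement of $\ker\pi_\E$ inside $\M$ and inverting $\pi_\E$ on it yields a linear selection $M_1 \colon \E_1 \to \F$ with $(M_1 e, e) \in \M$ for all $e \in \E_1$. Monotonicity of $\M$ then gives $\langle e \mid M_1 e\rangle \ge 0$ for every $e \in \E_1$, so the symmetric part of $M_1$ is nonnegative on $\E_1$.

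It then remains to extend $M_1$ to a globally monotone $M \colon \E \to \F$; this is the step needing the most care, though no genuine obstruction arises. Fixing a splitting $\E = \E_1 \oplus C$ and bases identifying $\F$ with $\E^*$, I would write $M$ as a matrix whose action on $\E_1$ equals $M_1$, choose the entries mapping $C$ into the $\E_1$-dual directions so that the symmetric part $M + M^\top$ has vanishing $\E_1$--$C$ cross block, and set the $C$-block to a sufficiently large positive multiple of the identity. Since the $\E_1$-block of $M + M^\top$ is exactly the (already nonnegative) symmetric part of $M_1$ and is untouched by these choices, the completed $M + M^\top$ is block diagonal with nonnegative blocks, hence $M + M^\top \ge 0$. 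Crucially, the values of $M$ off $\E_1$ never enter \eqref{eq:monrep}, because the constraint $G^\top e = 0$ forces $e \in \E_1$, where $Me = M_1 e$.

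Finally I would verify the set equality. For the inclusion of $\M$ into the right-hand set, given $(f,e)\in\M$ we have $e \in \E_1 = \ker G^\top$, and since $(M_1 e, e)\in\M$, linearity gives $(f - M_1 e, 0)\in\M$, so $f - M_1 e \in \F_0 = \im G$ and hence $f = Me - G\lambda$ for a suitable $\lambda$. Conversely, if $G^\top e = 0$ and $f = Me - G\lambda$, then $e \in \E_1$ forces $Me = M_1 e$, while $-G\lambda \in \im G = \F_0$ gives $(-G\lambda, 0)\in\M$; adding $(M_1 e, e)\in\M$ yields $(f,e)\in\M$. These two inclusions establish \eqref{eq:monrep}. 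The conceptual crux is the identity $\im G = \F_0$ from Lemma \ref{lem:key}: it is exactly what guarantees that the multiplier term $-G\lambda$ reproduces the indeterminacy $\F_0$ of the fibres of $\M$, neither more nor less.
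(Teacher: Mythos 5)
Your proof is correct and follows essentially the same route as the paper: both rest on Lemma \ref{lem:key} to get $\im G = \E_1^\perp = \F_0$, define $M$ on $\E_1$ from the fibres of $\M$ (well defined modulo $\F_0$, which is exactly what the $-G\lambda$ term absorbs), and then extend $M$ monotonically to all of $\E$. You merely make explicit the linear selection $M_1$ and the block-matrix extension argument that the paper compresses into ``finally extend $M$ to a monotone map on the whole of $\E$.''
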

\begin{proof}
Define $M: \E_1 \to \E_1^*$, with $\E_1^*$ the dual space of $\E_1$, as $e \mapsto f$ whenever $(f,e) \in \M$. This is well-defined, since if there exists $f'$ such that also $(f',e) \in \M$, then $f-f' \in \F_0 = \E_1^\perp$, and thus $f$ and $f'$ define the same element of $\E_1^*$. Since $\E_1= \ker G^\top$ it follows that $\F_0 = \E_1^\perp= \im G.$ This corresponds to the addition of vectors $-G \lambda$ with $\lambda \in \Lambda$ arbitrary. It is immediate that $M$ is a monotone map. Finally extend $M$ to a monotone map on the whole of $\E$; again denoted by $M$.
\end{proof}
Note that the augmented linear map
\[
\bma M & -G \\[2mm] G^\top  & 0 \ema : \E \times \Lambda \to \F \times \Lambda^*
\]
is also monotone.
A direct consequence of Proposition \ref{prop:monotone} is the following.
\begin{proposition}
\label{prop:decompose}
Consider a maximally monotone structure $\M \subset \F \times \E$ as in Proposition \ref{prop:monotone}. Then there exist linear spaces $\F_R$ and $\E_R=\F_R^*$ such that $\M = \D \circ \R$ for some  Dirac structure $\D \subset \F \times \E \times \F_R \times \E_R$ and some nonnegative Lagrange structure $\R \subset \F_R \times \E_R$.
\end{proposition}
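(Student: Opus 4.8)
The plan is to construct $\D$ and $\R$ directly from the representation of $\M$ supplied by Proposition~\ref{prop:monotone}, by separating the symmetric and skew-symmetric parts of the monotone map $M$. First I would split $M = J + R$ with $J := \frac{1}{2}(M - M^\top)$ skew-symmetric and $R := \frac{1}{2}(M + M^\top)$ symmetric, noting that $R \geq 0$ precisely because $M$ is monotone. Since $R$ is symmetric positive semidefinite it factors as $R = BB^\top$ with $B : \mR^r \to \F$ and $r = \rank R$; this fixes the resistive spaces as $\F_R := \mR^r$ and $\E_R := \F_R^* = \mR^r$. The skew part $J$ and the constraint map $G$ will go into the Dirac structure, while the dissipative part $R$ will be routed to the resistive port through $B$.

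Concretely, I would take the resistive structure to be the graph of the identity,
\[
\R := \{ (f_R,e_R) \in \F_R \times \E_R \mid f_R = e_R \},
\]
which is the graph of a symmetric positive definite map and hence a nonnegative Lagrange structure, and I would define the Dirac structure by adjoining the resistive port and the constraint to a skew-symmetric structure matrix:
\[
\D := \{ (f,e,f_R,e_R) \mid \exists\, \lambda :\; \bma f \\ f_R \ema = \bma J & -B \\ B^\top & 0 \ema \bma e \\ e_R \ema - \bma G \\ 0 \ema \lambda, \; G^\top e = 0 \}.
\]

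Next I would verify the two defining properties of $\D$. Power-conservation $<e \mid f> + <e_R \mid f_R> = 0$ on $\D$ is a direct computation: the term $e^\top J e$ vanishes by skew-symmetry, $e^\top G \lambda = (G^\top e)^\top \lambda = 0$ by the constraint, and the cross terms $-e^\top B e_R$ and $e_R^\top B^\top e$ cancel. For maximality I would parametrize $\D$ by $e \in \E_1 = \ker G^\top$, by $e_R \in \E_R$, and by the free coset direction $f \in Je - Be_R + \im G$ produced by the auxiliary variable $\lambda$, with $f_R = B^\top e$ determined; invoking Lemma~\ref{lem:key} in the form $\F_0 = \im G = \E_1^\perp$ (so $\dim \E_1 + \dim \F_0 = n$) this gives $\dim \D = \dim \E_1 + \dim \E_R + \dim \F_0 = n + r$, the half-dimension that certifies $\D$ is a Dirac structure.

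Finally I would compute $\D \circ \R$. The composition forces $(-f_R,e_R) \in \R$, hence $e_R = -f_R = -B^\top e$; substituting into the $f$-equation yields $f = Je + BB^\top e - G\lambda = Me - G\lambda$ together with $G^\top e = 0$, which is exactly the representation of $\M$ from Proposition~\ref{prop:monotone}, so $\D \circ \R = \M$. The main obstacle is the maximality (half-dimension) check for $\D$: unlike the power-conservation identity it is not a one-line computation, and it depends on correctly accounting for the freedom introduced by $\lambda$ together with the identity $\F_0 = \im G$ from Lemma~\ref{lem:key}; once this dimension count is secured, the remaining steps reduce to routine sign bookkeeping in the composition.
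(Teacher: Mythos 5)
Your proof is correct and follows essentially the same route as the paper: represent $\M$ via Proposition~\ref{prop:monotone}, split $M$ into its skew-symmetric and positive semidefinite symmetric parts, and route the symmetric part to a resistive port while the skew part, the constraint $G^\top e=0$, and the freedom $\im G$ form the Dirac structure. Your variant with $R=BB^\top$, a reduced resistive space $\mR^r$, and $\R$ the graph of the identity is exactly the refinement the paper records in the remark following Corollary~\ref{cor}, and your explicit isotropy-plus-dimension check of $\D$ fills in what the paper dismisses as ``immediately checked.''
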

\begin{proof}
Represent $\M$ as in \eqref{eq:monrep}. The monotone linear map $M$ can be decomposed as $M=-J+R$ with $-J=J^\top$ and $R=R^\top \geq 0$. Then consider flow and effort vectors $f_R \in \F_R:=\F$ and $e_R \in \E_R:=\E$, related by the resistive relation $f_R=Re_R$, defining the nonnegative Lagrange structure $\R= \mbox{graph}(R)$. Define the Dirac structure $\D \subset \F \times \E \times \F_R \times \E_R$ as the linear space of all $f,e,f_R,e_R$ satisfying
\[
f= -Je -G\lambda -f_R, \; 0=G^\top e, \; e_R=e
\]
for some $\lambda \in \Lambda$.
It is immediately checked that $\M = \D \circ \R$.
\end{proof}
Note that Proposition \ref{prop:decompose}, as evidenced by its proof, can be regarded as the 'linear relation' version of the well-known fact that any monotone linear map $M$ can be decomposed as $M=-J +R$, with $-J=J^\top$ the skew-symmetric part of $M$, and $R=R^\top$ the symmetric positive semi-definite part. Proposition \ref{prop:decompose} leads to the following main result of this paper.
\begin{corollary}
\label{cor}
Definition \ref{def:pH2} of a port-Hamiltonian DAE system is equivalent to Definition \ref{def:pH1}.
\end{corollary}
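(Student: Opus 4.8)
The plan is to read the Corollary off directly from the two structural facts already in place: first, that the composition $\D \circ \R$ of a Dirac structure with a nonnegative Lagrange structure is maximally monotone (the result of \cite{MehS22_ppt} recalled just before Definition~\ref{def:pH2}); and second, Proposition~\ref{prop:decompose}, which conversely writes every maximally monotone $\M$ as such a composition. Since a system of the form \eqref{extdae} carries the resistive variables $f_R,e_R$ as auxiliary (latent) variables whereas a Definition~\ref{def:pH2} system does not, the equivalence I would establish is that the two definitions generate the same \emph{manifest} behavior in $(\dot x, x, f_P, e_P)$; that is, the projection of \eqref{extdae} onto those variables coincides with the behavior of $(\M,\cL)$. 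Both inclusions then follow from unfolding the composition operator and matching sign conventions, with $\cL$ kept the same in both descriptions.

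For the direction Definition~\ref{def:pH1}~$\Rightarrow$~Definition~\ref{def:pH2}, I would start from $(\D,\cL,\R)$ and set $\M := \D \circ \R \subset \X \times \X^* \times \F_P \times \E_P$, composing over the resistive ports $(f_R,e_R)$. By the cited result $\M$ is maximally monotone, so $(\M,\cL)$ is a legitimate Definition~\ref{def:pH2} system, and it remains to check that unfolding $\M$ inside the Definition~\ref{def:pH2} behavior reproduces the projection of \eqref{extdae}. The only point requiring attention is to align the sign flip $-f$ in the composition $\M_1 \circ \M_2$ with the sign flip $-f_R$ appearing in \eqref{extdae}: taking the shared flow to be $-f_R$ and the shared effort to be $e_R$ makes $\D$ see $(-f_R,e_R)$ and $\R$ see $(f_R,e_R)$, exactly as in \eqref{extdae}. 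With this identification the existential quantifier over $(f_R,e_R)$ in the unfolded composition is literally the quantifier projecting them out of \eqref{extdae}, so the two behaviors agree.

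For the reverse direction I would invoke Proposition~\ref{prop:decompose}. Given $(\M,\cL)$ with $\M \subset \X \times \X^* \times \F_P \times \E_P$ maximally monotone, I group the state and port variables by taking $\F := \X \times \F_P$ and $\E := \X^* \times \E_P$, and apply the Proposition to obtain spaces $\F_R,\E_R$, a Dirac structure $\D$, and a nonnegative Lagrange structure $\R \subset \F_R \times \E_R$ with $\M = \D \circ \R$. After permuting the factors so that $\D \subset \X \times \X^* \times \F_R \times \E_R \times \F_P \times \E_P$, the triple $(\D,\cL,\R)$ is a Definition~\ref{def:pH1} system whose manifest behavior equals that of $(\M,\cL)$, by the same unfolding as above; the two inclusions together give the asserted equivalence. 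I expect no genuine mathematical obstacle here, since the substantive content is already carried by the maximal monotonicity of $\D \circ \R$ and by Proposition~\ref{prop:decompose}; the only care needed is the bookkeeping of the composition's sign convention together with the grouping and reordering of the state and port factors.
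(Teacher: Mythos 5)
Your proposal is correct and follows essentially the same route as the paper: one direction from the already-noted fact that $\D \circ \R$ is maximally monotone, the converse from Proposition~\ref{prop:decompose}, with the composition's sign convention matching the $-f_R$ in \eqref{extdae}. The paper's own proof is just a terser version of this; your extra care with the sign bookkeeping and the projection onto the manifest variables is sound but not a different argument.
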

\begin{proof}
We already noticed that Definition \ref{def:pH2} covers Definition \ref{def:pH1} (since the composition of a Dirac structure $\D$ with a nonnegative Lagrange structure $\R$ is a maximally monotone structure $\M$). On the other hand, by Proposition \ref{prop:decompose} any maximally monotone structure $\M \subset \F \times \E \times \F_P \times \E_P$ can be written as $\M = \D \circ \R$ for some Dirac structure $\D \subset \F \times \E \times \F_P \times \E_P \times \F_R \times \E_R$ and nonnegative Lagrange structure $\R \subset \F_R \times \E_R$.
\end{proof}
\begin{remark}
With respect to the proof of Proposition~\ref{prop:decompose} we note that if the matrix $R$ in the decomposition $M=-J+R$ does not have full rank then $\F_R,\E_R$ can be replaced by \emph{ lower-dimensional} spaces $\widetilde{\F}_R,\widetilde{\E}_R$ by first factoring $R:= \widetilde{G}_R \widetilde{R} \widetilde{G}_R^\top$, with $\widetilde{R}=\widetilde{R}^\top >0$. Then we can define the resistive relation as $\widetilde{f}_R=\widetilde{R}\widetilde{e}_R$ resulting in the nonnegative Lagrange structure $\widetilde{\R}$, and define the Dirac structure $\widetilde{\D}$ as the linear space of all $f,e,\widetilde{f}_R, \widetilde{e}_R$ satisfying
\bq
f= -Je -G\lambda -\widetilde{G}_R\widetilde{f}_R, \; 0=G^\top e, \; \widetilde{e}_R=\widetilde{G}_R^\top e
\eq
for some $\lambda$.
\end{remark}

\begin{remark}
We could {\it weaken} the requirements on $\R$ in Definition \ref{def:pH1} by only requiring that $e_R^\top f_R \geq 0$ for all $(f_R,e_R) \in \R$ (or equivalently the quadratic form defined by the bilinear form $\langle \cdot, \cdot \rangle_+$ is nonnegative on $\R$). This would still result in an energy balance as in \eqref{eq:energybalance}.
However, in this case the definition of a port-Hamiltonian DAE system $(\D,\cL,\R)$ is \emph{ not} anymore equivalent to the definition of a port-Hamiltonian DAE system $(\M,\cL)$. See also \cite{GerHR21} for closely related discussions.
\end{remark}
\section{Coordinate representations}
While Definition \ref{def:pH1} gave rise to the coordinate representation \eqref{coor1} of a port-Hamiltonian DAE system $(\D,\cL,\R)$, Definition \ref{def:pH2} leads to the following coordinate representation of a port-Hamiltonian DAE system $(\M,\cL)$. 
By Proposition \ref{prop:monotone} $\M$ can be represented as
\[
\M=\{(f,e,f_P,e_P) \mid \mbox{there exists } \lambda \mbox{ s.t. }\bma f \\[2mm] f_P \\[2mm] 0 \ema = \bma M_{ee} & M_{eP} & - G \\[2mm] M_{Pe} & M_{PP} & -G_P \\[2mm] G^\top & G_P^\top & 0 \ema \bma e \\[2mm] e_P \\[2mm] \lambda \ema \},
\]
for some monotone map $M= \bma M_{ee} & M_{eP} \\ M_{Pe} & M_{PP} \ema$. Decompose $M$ into its skew-symmetric and positive semi-definite symmetric part
\bq
\bma M_{ee} & M_{eP} \\[2mm] M_{Pe} & M_{PP} \ema = \bma -J & -B \\[2mm] B^\top & N \ema + \bma R & V \\[2mm] V^\top & W \ema,
\eq
with $J,N$ skew-symmetric, and $R,W$ symmetric. Substituting $f=-\dot{x}$ then leads to the dynamical equations
\bq
\label{eq:dyna}
\begin{array}{rcl}
\dot{x} & = & \left(J-R\right)e + \left(B-V\right)e_P  + G \lambda \\[2mm]
f_P & = & \left(B + V \right)^\top e + \left(N +W\right)e_P - G_P \lambda \\[2mm]
0 & = & G^\top e + G_P^\top e_P,
\end{array}
\eq
together with $(x,e) \in \cL$.
Then represent the Lagrange structure $\cL$ as in \eqref{SP1}, and thus write $x=Pz$, $e=Sz$. It follows that the dynamics of $(\M,\cL)$ can be written as
\bq
\label{eq:pHnorm2}
\begin{array}{rcl}
\bma P & 0 \\[2mm] 0 & 0 \ema
\bma \dot{z} \\[2mm] \dot{\lambda} \ema & = & \bma J-R & G \\[2mm] G^\top &  0 \ema
\bma S & 0 \\[2mm]  0 & I \ema \bma z \\[2mm] \lambda \ema + \bma B-V \\[2mm] G_P^\top \ema e_P \\[6mm]
f_P & = & \left(B + V \right)^\top Sz + \left(N +W\right)e_P
\end{array}
\eq
Note that system \eqref{eq:pHnorm2} is of the form of a port-Hamiltonian DAE system as introduced e.g. in \cite{BeaMXZ18,MehMW18}.
In particular all algebraic constraints are encapsulated in the {\it augmented} Lagrange structure $\cL_a \subset \X \times \X^* \times \Lambda \times \Lambda^*$ defined by the \emph{augmented} $P$ and $S$ matrices
\[
P_a:= \bma P  & 0 \\[2mm] 0 & 0 \ema, \quad S_a:= \bma S & 0 \\[2mm]  0 & I \ema.
\]
Indeed, the algebraic constraints correspond to singularity of $P_a$. Such algebraic constraints were called \emph{ Lagrange algebraic constraints} in \cite{SchM18}. The augmented Lagrangian structure defines the \emph{degenerate} augmented Hamiltonian
\[
\mathcal H_a(z,\lambda) := \frac{1}{2} \bma z & \lambda \ema^\top S_a^\top P_a \bma z \\[2mm] \lambda \ema = \frac{1}{2} z^\top S^\top P z =\mathcal H(z).
\]
The fact that all algebraic constraints are captured by the Lagrange structure defining energy storage for the system with augmented state vector $(x,\lambda)$ is in line with the result in \cite{SchM18} showing that algebraic constraints arising from the interconnection network structure (called \emph{Dirac algebraic constraints} in \cite{SchM18}) can be converted into Lagrange algebraic constraints by augmenting the state by a Lagrange multiplier vector $\lambda$.

An alternative coordinate representation where not all algebraic constraints are due to the Lagrange structure is the following; extending the developments in \cite{MehS22_ppt}.
Any maximally monotone structure $\M \subset \F \times \E \times \F_P \times \E_P$ can be represented as
\bq
\label{YZ}
\M = \im \bma Z^\top \\ Y^\top \\ Z_P^\top \\ Y_P^\top \ema,
\eq
for matrices $Y,Z,Y_P,Z_P$ satisfying
\bq
\label{YZ1}
YZ^\top + ZY^\top + Y_PZ_P^\top + Z_PY_P^\top\geq 0, \; \rank \bma Z & Y & Z_P & Y_P \ema = \dim \left( \F \times \F_P \right).
\eq
Conversely, any subspace defined by $Y,Z,Y_P,Z_P$ satisfying \eqref{YZ1} is a maximally monotone structure.
This means that any element $(f,e,f_P,e_P) \in \M$ can be represented as
\bq
\label{eq:ZY}
\bma f \\ e \\ f_P \\ e_P \ema = \bma Z^\top \\ Y^\top \\ Z_P^\top \\ Y_P^\top \ema v
\eq
for some $v \in \mR^k$ with $k= \dim \left(\F \times \F_P \right)$. Now consider any matrix tuple $(A,C,A_P,C_P)$ satisfying
\[
\ker \bma A & C & A_P & C_P \ema = \im \bma Z^\top \\ Y^\top \\ Z_P^\top \\ Y_P^\top \ema.
\]
Then pre-multiplication by such a \emph{maximal annihilator} $\bma A & C & A_P & C_P \ema$ eliminates the parametrizing variables $v$, so as to obtain the following kernel representation of $\M$
\[
Af +  Ce +  A_Pf_P + C_Pe_P=0.
\]
Furthermore, representing $\cL$ as in \eqref{SP1}, and substituting $-f=\dot{x}=P\dot{z}$ and $e=Sz$ then yields the DAE system
\bq
\label{ACham}
AP \dot{z} = CSz + A_Pf_P + C_Pe_P,
\eq
where the algebraic constraints are determined both by $A$ and $P$. In fact, the algebraic constraints of \eqref{ACham} can be split, similarly to \eqref{KL}, into two classes: one corresponding to singularity of $P$ (Lagrange algebraic constraints), and one corresponding to singularity of $A$ (algebraic constraints due to the maximally monotone structure $\M$. Mimicking the developments in \cite{MehS22_ppt} one can transform algebraic constraints belonging to one class into algebraic constraints in the other, by the use of additional state variables (Lagrange multipliers).
\begin{remark}
If $\M$ is actually a Dirac structure \emph{ without} flow and effort variables $f_R,e_R$ and represented by $K,L,K_P,L_P$ satisfying \eqref{KL} (without $K_R,L_R$), then we can simply take $\bma A & C & A_P & C_P \ema =\bma K & L & K_P & L_P\ema $, and \eqref{ACham} reduces to the lossless version of \eqref{coor1}.
\end{remark}
Finally note that by \eqref{eq:ZY} the linear space of vectors $\bma e \\e_P \ema$ is equal to $\im \bma Y^\top \\ Y^\top_P \ema$. Hence the matrices $G,G_P$ in \eqref{eq:dyna} are such that (recall \eqref{eq:key})
\[
\ker \bma G^\top & G^\top_P \ema = \im \bma Y^\top \\ Y^\top_P \ema \; = \left(\E \times \E_P\right)_1,
\]
or equivalently
\[
\ker \bma Y & Y_P \ema = \im \bma G \\ G_P \ema \; = \left(\F \times \F_P\right)_0.
\]

\subsection{Transfer functions of port-Hamiltonian DAE systems}
The transfer function of the representation \eqref{eq:pHnorm2} of a port-Hamiltonian DAE system can be derived relatively easily in the special case $G_P=0$. If this condition is \emph{ not} satisfied then it is shown in \cite{MehU23} how by a port-Hamiltonian \emph{ extension} of the state space this condition can be enforced. Note furthermore that geometrically $G_P=0$ is equivalent to the maximally monotone subspace $\M$ of the port-Hamiltonian DAE system being such that
\bq
0 \times \E_P \subset \pi^* (\M),
\eq
where $\pi^*$ is the projection $\pi^*: \F \times \E \times \F_P \times \E_P \to \E \times \E_P$. 

Thus let $G_P=0$, and additionally assume $\cL =\mbox{graph}(Q)$ (i.e., equal to the graph of the gradient of a Hamiltonian $\mathcal H(x)=\frac{1}{2} x^\top Qx$). Then \eqref{eq:pHnorm2} reduces to
\bq
\label{eq:pHnorm}
\begin{array}{rcl}
\dot{x} & = & \left(J-R\right)Qx + \left(B-V\right)u  + G \lambda \\[2mm]
y & = & \left(B + V \right)^\top Qx + \left(N +W\right)u  \\[2mm]
0 & = & G^\top Qx,
\end{array}
\eq
where we have renamed $e_P$ as $u$ (the external variables which are considered to be the \emph{inputs} of the system) and $f_P$ as $y$ (the \emph{outputs} of the system).

Assuming furthermore $Q$ to be \emph{invertible} and, without loss of generality, $G$ to have full column rank, then by differentiating the constraint $G^\top Qx=0$ with respect to time and by substituting the expression for $\dot{x}$, it follows that the Lagrange multiplier vector $\lambda$ is determined as
\[
\lambda = -\left(G^\top Q G\right)^{-1} G_e^\top Q\left(J-R\right)Qx - \left(G^\top Q G\right)^{-1} G^\top Q \left(B-V\right)u.
\]
Substituted into \eqref{eq:pHnorm} this leads to the standard input-state-output system
\bq
\label{eq:explicit}
\begin{array}{rcl}
\dot{x} & = & \Pi \left(J - R\right)  Qx \, + \Pi \left(B-V \right)u, \\[2mm]
y & = & \left(B + V \right)^\top Qx + \left(N +W \right)u,
\end{array}
\eq
where $\Pi:=I - G  \left(G^\top Q G\right)^{-1} G^\top Q$ is a \emph{ projection matrix} (onto $\ker G^\top Q$, orthogonal with respect to the (possibly indefinite) inner product defined by $Q$). It follows that $\ker G^\top Q$ is an invariant subspace of the system \eqref{eq:explicit}.
The transfer function of \eqref{eq:explicit} is
\bq
y = \left(B + V\right)^\top Q \left( sI - \Pi \left(J - R \right)Q\right)^{-1} \Pi \left(B-V\right)u + \left(N + W\right) u,
\eq
which can be checked to be \emph{ positive real} if $Q>0$.

\section{Conclusions}
Although, from a physical systems modeling perspective, the distinction between Dirac structures (power-conserving interconnection) and energy-dissipating relations is often useful, from an analysis and computational point of view the overarching notion of a maximally monotone structure has advantages as well. The current paper contributes to showing the equivalence between these two viewpoints, and in particular provides the useful coordinate representation \eqref{eq:pHnorm2} for any linear port-Hamiltonian DAE system.

\end{document}